\renewcommand{\le}{\leqslant}
\renewcommand{\leq}{\leqslant}
\renewcommand{\geq}{\geqslant}
\newcommand{\barycenter}{\mathbf{b}}
\begin{document}

\title[]
      {Hermite--Hadamard type inequalities for Wright-convex functions of several variables}
\author[]{Dorota Śliwińska \and \SW}
\address{\SWaddr}
\email[\SW]{\SWmail}
\email[Dorota Śliwińska]{\color{blue}dsliwinska@ath.bielsko.pl}

\subjclass[2010]{Primary: 26B25, 26D15; Secondary: 39B62, 65D32}
\keywords{%
 Convex functions,
 Wright-convex functions,
 strongly Wright-convex functions,
 Hermite--Hadamard inequality
 }

\date{\today}

\begin{abstract}
We present Hermite--Hadamard type inequalities for Wright-con\-vex, strongly convex and strongly Wright-convex functions of several variables defined on simplices.
\end{abstract}

\maketitle

\section{Introduction}
One of the most classical inequalities in the theory od convex functions is the Hermite--Hadamard inequality. It states that if $f:[a,b] \to \mathbb{R}$ is convex then
\[f \left( \frac{a+b}{2} \right) \leq  \frac{1}{b-a}\int_a^bf(x)\dx \leq \frac{f(a)+f(b)}{2}\]
 It plays an important role in convex analysis, so in the literature one can find its various generalizations and applications. For example, an exhausting study of this inequality is given in~the book~\cite{DraPea02}.

 Recall that a function $f:D\to\R$, where $D\subset\R^n$ is a~convex set, is called \emph{Wright-convex} (W-convex for short), if
 \[
  f\bigl(tx+(1-t)y\bigr)+f\bigl((1-t)x+ty\bigr)\le f(x)+f(y)
 \]
 for any $x,y\in D$ and $t\in[0,1]$. Trivially we can see that any convex function is necessarily W-convex and any W-convex function is Jensen-convex (i.e. it fulfills the above inequality with $t=\frac{1}{2}$). However, these inclusions are proper. It is evident, if one knows the famous Ng's representation (cf.~\cite{Ng87}). It states that any W-convex function defined on an~open and convex set $D\subset\R^n$ is the sum of an additive function $a:\R^n\to\R$ and a convex function $g:D\to\R$. Therefore, if $f:\R\to\R$ is W-convex, then either $f$ is continuous (and then convex), or the graph of~$f$ is a dense subset of a~plane. Hence by putting $f(x)=|a(x)|$, where $a:\R\to\R$ is a~discontinuous additive function, we obtain a~Jensen-convex function, which is not W-convex. Of course, the function $a$ is a~W-convex function, which is not convex.

 It was natural to generalize the Hermite--Hadamard inequality to the functions of several variables. In the case of simplices, for the first time it was done by Neuman~\cite{Neu90} (see also \cite{Bes08}, \cite{GueSch03} and \cite{Was08MIA} for the functions defined on simplices and \cite{DraPea02}, \cite{NicPer06} for more general domains). Recently Olbryś~\cite{Olb} obtained the following inequality of Hermite--Hadamard type: if $f:\I\to\R$ (where $\I\subset\R$ is an open interval) is W-convex, then
 \begin{equation}\label{eq:Olb}
  2f\Bigl(\frac{a+b}{2}\Bigr)\le\frac{1}{b-a}\int_a^b\bigl(f(x)+f(a+b-x)\bigr)\,\text{d}x\le f(a)+f(b)
 \end{equation}
 for any $a,b\in\I$. Because the note~\cite{Olb} is actually unpublished, let us mention that~\eqref{eq:Olb} is an immediate consequence of the Hermite--Hadamard inequality. It is enough to apply it to the convex (due to Ng's representation of~$f$) function $[a,b]\ni x\mapsto f(x)+f(a+b-x)$.
\par\medskip
 Motivated by this beautiful Olbryś's result we present in this paper its multivariate counterparts. We also give some related inequalities for strongly convex and strongly W-convex functions of several variables.

\section{Definitions and basic properties}
Let $v_0, \dots, v_n \in \mathbb{R}^n$ be affine independent and let  $S=\conv\{v_0,\dots,v_n\}$ be a simplex with vertices $v_0, \dots, v_n.$ Denote by $|S|$ its volume and by $\barycenter$ its barycenter, \ie
\[
 \barycenter=\frac{1}{n}\sum_{i=1}^n v_i\,.
\]
Any element $x \in S$ is uniquely represented by a convex combination of the vertices:
\[
 x=\sum_{i=0}^n t_iv_i\,,
\]
where the coefficients $t_i\geq 0$, $i=0,\dots,n$, with $t_0+\dots+t_n=1$, are called the \emph{barycentric coordinates} of~$x$. Moreover, any $x\in\R^n$ has the above (unique) representation with real scalars summing up to~$1$.

Denote by $C$ the set of all cyclic permutations of $\{0,\dots,n\}$. Any $\sigma\in C$ generates an affine transformation $\sigma:\R^n\to\R^n$ in the manner
\[
 \sigma \Bigl( \sum_{i=0}^nt_iv_i \Bigr)=\sum_{i=0}^n t_{\sigma (i)}v_i\,.
\]
From now on we identify $\sigma\in C$ with the affine map $\sigma$ given as above. For  $\sigma\in C$ and for any function $f:S \to \mathbb{R}$ we define the function $f_{\sigma}:S\to S$ by
\[
f_{\sigma}(x)=f\bigl(\sigma (x)\bigr).
\]
Next we introduce the \emph{symmetrization}~$F$ of a~function~$f$ as follows:
\begin{equation}\label{eq:sym}
 F(x)=\sum_{\sigma \in C} f_{\sigma}(x)\,,\quad x\in S.
\end{equation}
It is easy to observe that $F$ is symmetric with respect to the barycenter, which means that $F\bigl(\sigma(x)\bigr)=F(x)$ for any $\sigma\in C$.

In our article we use the Hermite--Hadamard inequality on simplices, which was firstly given by Neuman~\cite{Neu90}, then reproved by Guessab and Schmeisser~\cite{GueSch03}, Bessenyei~\cite{Bes08} and the second author~\cite[Corollary~3]{Was08MIA}:

\begin{thm}\label{tw:1}
If $f: S \to \mathbb{R}$ is convex then
\begin{equation}\label{eq:sw}
f(\barycenter) \leq \frac{1}{|S|} \int_S f(x)\dx \leq \frac{1}{n+1} \sum_{i=0}^n f(v_i).
\end{equation}
\end{thm}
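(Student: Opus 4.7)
The plan is to prove the two halves of~\eqref{eq:sw} separately, both relying on convexity of~$f$ together with the elementary identity
\[
 \barycenter=\frac{1}{|S|}\int_S x\dx,
\]
which exhibits the barycenter of $S$ as its centre of mass. I would justify this identity either by a direct calculation in barycentric coordinates, or more conceptually by noting that $S$ admits affine automorphisms permuting the $v_i$, so its centre of mass must be invariant under vertex permutations and hence equal to the arithmetic mean of the vertices.

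For the left inequality, Jensen's integral inequality applied to the convex function~$f$ immediately gives
\[
 f(\barycenter)=f\Bigl(\frac{1}{|S|}\int_S x\dx\Bigr)\le\frac{1}{|S|}\int_S f(x)\dx.
\]
For the right inequality, I would represent every $x\in S$ by its barycentric coordinates $x=\sum_{i=0}^n t_i(x)v_i$ and apply the defining convexity estimate pointwise,
\[
 f(x)\le\sum_{i=0}^n t_i(x)f(v_i),
\]
after which integration over $S$ reduces the problem to computing the first-order moments $\int_S t_i(x)\dx$.

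The only step requiring some care is the claim that these moments are all equal. This follows from the same symmetry observation: for each transposition of two vertices (or, equivalently, each cyclic $\sigma\in C$) the induced affine self-map of $S$ is volume-preserving and sends $t_i$ to $t_{\sigma(i)}$, so the integral $\int_S t_i\dx$ does not depend on~$i$. Since $\sum_{i=0}^n t_i\equiv 1$ on $S$, each of these integrals equals $|S|/(n+1)$, and substituting this into the integrated pointwise inequality yields the right half of~\eqref{eq:sw}. I do not foresee any genuinely hard step here; the main technical content is the centroid identity and the invariance of these moments under vertex permutations, both of which are standard and reduce to affine changes of variable.
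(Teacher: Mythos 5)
Your argument is correct, and it is worth noting that the paper itself does not prove Theorem~\ref{tw:1} at all: it is quoted as a known result with references to Neuman, Guessab--Schmeisser, Bessenyei and W\k{a}sowicz, so there is no internal proof to compare against. What you propose is essentially the classical proof found in those sources: the lower bound is Jensen's integral inequality applied at the centre of mass $\barycenter=\frac{1}{|S|}\int_S x\dx$ (which exists as an interior point of $S$, so a supporting affine function of $f$ at $\barycenter$ is available and integrating it gives the inequality), and the upper bound comes from integrating the pointwise estimate $f(x)\le\sum_{i=0}^n t_i(x)f(v_i)$ together with the fact that the first moments $\int_S t_i(x)\dx$ are all equal to $|S|/(n+1)$. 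Your symmetry justifications are sound: any affine bijection of $S$ onto itself has linear part of determinant $\pm1$, hence is volume-preserving, and permuting the vertices permutes the coordinate functions $t_i$, so both the centroid identity and the equality of the moments follow; note that for the moments you only need the permutation action to be transitive on the indices, so either transpositions or the cyclic group $C$ suffice. The only points you leave implicit are routine: $f$ is measurable and integrable on $S$ because it is continuous on the interior, bounded above by $\max_i f(v_i)$ via the barycentric estimate, and bounded below by the supporting affine function at $\barycenter$; and one should state the centroid as $\barycenter=\frac{1}{n+1}\sum_{i=0}^n v_i$ (the displayed formula $\frac{1}{n}\sum_{i=1}^n v_i$ in the paper's Section~2 is a typo, as the later use for the unit simplex confirms).
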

To prove the Hermite-Hadamard type inequality for W-convex functions, we need two lemmas. The first of them could be found in \cite[Lemma 2.2]{WasWit12}.
\begin{lem}
If $g: S \to R$ is convex then so is $g_{\sigma}$. \label{tw:l1}
\end{lem}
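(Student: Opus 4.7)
My plan is to recognize that $g_{\sigma}$ is simply the composition $g\circ\sigma$ of a convex function with an affine map, and then invoke the elementary fact that such compositions are convex. The statement preceding the lemma already asserts that $\sigma:\R^n\to\R^n$ is an affine transformation, so the bulk of the work is to record why this affinity, together with the invariance $\sigma(S)=S$, yields the claim.

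First I would verify (or at least recall) that $\sigma$ is indeed affine. Any $x\in\R^n$ admits a unique representation $x=\sum_{i=0}^n t_iv_i$ with $t_0+\dots+t_n=1$, and for $x=\sum t_iv_i$, $y=\sum s_iv_i$ and $\lambda\in\R$ the point $\lambda x+(1-\lambda)y$ has barycentric coordinates $\lambda t_i+(1-\lambda)s_i$ (which still sum to $1$). Applying the definition of $\sigma$ then gives
\[
 \sigma\bigl(\lambda x+(1-\lambda)y\bigr)=\sum_{i=0}^n\bigl(\lambda t_{\sigma(i)}+(1-\lambda)s_{\sigma(i)}\bigr)v_i=\lambda\sigma(x)+(1-\lambda)\sigma(y),
\]
so $\sigma$ preserves affine combinations, hence is affine.

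Next I would observe that $\sigma$ maps $S$ into $S$: if $x=\sum t_iv_i\in S$, then all $t_i\ge 0$, and the coefficients of $\sigma(x)=\sum t_{\sigma(i)}v_i$ are merely a permutation of these, so they are still nonnegative and sum to~$1$; thus $\sigma(x)\in S$. Consequently, for any $x,y\in S$ and $t\in[0,1]$, both $\sigma(x)$ and $\sigma(y)$ lie in $S$, and by affinity of $\sigma$ together with convexity of $g$ we obtain
\[
 g_{\sigma}\bigl(tx+(1-t)y\bigr)=g\bigl(t\sigma(x)+(1-t)\sigma(y)\bigr)\le t\,g\bigl(\sigma(x)\bigr)+(1-t)g\bigl(\sigma(y)\bigr)=t\,g_{\sigma}(x)+(1-t)g_{\sigma}(y),
\]
which is exactly convexity of $g_{\sigma}$ on $S$.

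There is no real obstacle here; the proof is essentially a remark. The only point worth being careful about is not to conflate the permutation $\sigma\in C$ acting on the index set $\{0,\dots,n\}$ with the affine map it induces on $\R^n$, but since the paper has already adopted the convention of identifying the two, the argument reduces to a one-line observation.
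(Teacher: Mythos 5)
Your proof is correct and takes the standard route: the paper itself does not prove this lemma but only cites \cite[Lemma~2.2]{WasWit12}, and your argument (affinity of $\sigma$ via barycentric coordinates, invariance $\sigma(S)\subset S$, and convexity of a convex function composed with an affine map) is exactly the expected one. No gaps.
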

\begin{lem}
If $a: \mathbb{R}^n \to \mathbb{R}$ is additive, then its symmetrization $A=\displaystyle\sum_{\sigma \in C} a_\sigma$ is constant. \label{tw:l2}
\end{lem}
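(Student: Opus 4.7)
The plan is to exploit two facts: additivity lets us pull the outer sum inside $a$, i.e.\ $\sum_{\sigma\in C}a(\sigma(x))=a\bigl(\sum_{\sigma\in C}\sigma(x)\bigr)$, and the inner sum $\sum_{\sigma\in C}\sigma(x)$ turns out to be independent of $x$. Once both are in place, the result is immediate.

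First I would write $x\in\mathbb{R}^n$ in its (unique) barycentric representation $x=\sum_{i=0}^n t_iv_i$ with $\sum_{i=0}^n t_i=1$, so that by the definition of $\sigma$,
\[
 \sum_{\sigma\in C}\sigma(x)=\sum_{\sigma\in C}\sum_{i=0}^n t_{\sigma(i)}v_i=\sum_{i=0}^n\Bigl(\sum_{\sigma\in C}t_{\sigma(i)}\Bigr)v_i.
\]
The key combinatorial observation is that $C$, being the cyclic group generated by the $(n{+}1)$-cycle $(0\,1\,\dots\,n)$, acts regularly on the index set $\{0,\dots,n\}$. Hence for every fixed $i$ the map $\sigma\mapsto\sigma(i)$ is a bijection of $C$ onto $\{0,\dots,n\}$, which yields $\sum_{\sigma\in C}t_{\sigma(i)}=\sum_{j=0}^n t_j=1$. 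Consequently
\[
 \sum_{\sigma\in C}\sigma(x)=\sum_{i=0}^n v_i,
\]
a vector that does not depend on~$x$.

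Finally, additivity of $a$ gives $A(x)=\sum_{\sigma\in C}a(\sigma(x))=a\bigl(\sum_{\sigma\in C}\sigma(x)\bigr)=a\bigl(\sum_{i=0}^n v_i\bigr)$, proving that $A$ is a constant function on $S$ (indeed on all of $\mathbb{R}^n$). I do not anticipate a real obstacle; the only point that needs a careful word is the regularity of the cyclic action, which guarantees that the coefficient in front of each $v_i$ equals $\sum_j t_j=1$. Note that no homogeneity of $a$ is used—only additivity over finite sums—so the argument works even for discontinuous additive~$a$.
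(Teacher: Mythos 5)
Your argument is correct and is essentially the paper's own proof: both use finite additivity of $a$ to pull the sum over $\sigma$ inside $a$, and both reduce to the observation that $\sum_{\sigma\in C}\sigma(x)=\sum_{i=0}^n v_i$ is independent of $x$ (the paper computes this via affinity of $\sigma$ applied to the vertices, you via the regularity of the cyclic action on the coefficients --- the same fact). Your closing remark that only additivity over finite sums is needed, not homogeneity, is accurate and worth keeping.
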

\begin{proof} Any $x\in\R^n$ may be written as $x=\displaystyle\sum_{i=0}^nt_iv_i$ with real scalars $t_0,\dots,t_n$ (possibly not all positive) summing up to~$1$. Then by additivity of~$a$ and affinity of~$\sigma$ we get
\begin{multline*}
 A(x)=\sum_{\sigma \in C} a_{\sigma}(x)=\sum_{\sigma\in C}a\bigl(\sigma(x)\bigr)
 =a\biggl(\sum_{\sigma\in C}\sigma \Bigl(\sum_{i=0}^n t_iv_i\Bigr)\biggr)\\
  =a\biggl(\sum_{\sigma\in C}\Bigl(\sum_{i=0}^n t_i \sigma (v_i)\Bigr)\biggr)
  =a\Bigl(\sum_{i=0}^n v_i\Bigr)\,,
\end{multline*}
which is a constant.
\end{proof}

\section{Hermite--Hadamard type inequality for W-convex functions}

\begin{thm}\label{tw:3}
 Let $D\subset\R^n$ be an open and convex set and $S\subset D$ be a~simplex. If $f:D\to\R$  is W-convex, then its symmetrization~$F$ is convex on~$S$.
\end{thm}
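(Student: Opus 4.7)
The plan is to invoke Ng's representation theorem (mentioned in the introduction) to write $f = a + g$ on $D$, where $a:\R^n\to\R$ is additive and $g:D\to\R$ is convex. The symmetrization operation $f \mapsto \sum_{\sigma\in C} f_\sigma$ is clearly linear in $f$, so
\[
 F = \sum_{\sigma\in C} f_\sigma = \sum_{\sigma\in C}(a_\sigma+g_\sigma) = A + G,
\]
where $A$ and $G$ are the symmetrizations of $a$ and $g$ respectively.

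Next I would dispatch each summand using the two preparatory lemmas. By Lemma~\ref{tw:l2}, $A$ is constant on $\R^n$, hence in particular on~$S$, and a constant function is trivially convex. By Lemma~\ref{tw:l1}, each $g_\sigma$ is convex on~$S$ (Lemma~\ref{tw:l1} applies to each fixed $\sigma\in C$), and finite sums of convex functions are convex, so $G$ is convex on~$S$.

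Combining these, $F$ is the sum of a constant function and a convex function on~$S$, and is therefore convex on~$S$. The argument is essentially an assembly job: Ng's representation reduces the W-convex case to the convex case modulo an additive term, and the two lemmas handle those two pieces separately.

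There is no serious obstacle; the only point requiring a moment of care is making sure the representation can be applied on all of $D$ (which is why the hypothesis that $D$ is open and convex is used), so that the decomposition $f=a+g$ is valid on the entire simplex $S\subset D$ where convexity of $F$ is asserted.
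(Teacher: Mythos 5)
Your argument is correct and is precisely the paper's own proof: apply Ng's representation $f=a+g$ (valid because $D$ is open and convex), kill the additive part via Lemma~\ref{tw:l2}, and handle the convex part via Lemma~\ref{tw:l1} together with the fact that a sum of convex functions is convex. Nothing to add.
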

\begin{proof}
Because $f$ is W-convex, then $f=a+g$ for some additive function $a: \mathbb{R}^n \to \mathbb{R}$ and a convex function $g:D\to\R$. The function $A$ (symmetrization of~$a$, \cf~\eqref{eq:sym}) is constant by Lemma~\ref{tw:l2}, while the function~$G$ (symmetrization of~$g$ on~$S$), is convex by~Lemma~\ref{tw:l1}. Thus~$F$ is convex on~$S$.
\end{proof}

\begin{thm}\label{th:HH}
 Let $D\subset\R^n$ be an open and convex set and $S\subset D$ be a~simplex with vertices $v_0,\dots,v_n$ and barycenter $\barycenter$. If $f:D\to\R$ is W-convex, then
\[ (n+1)f(\barycenter) \leq \frac{1}{|S|} \int_S \Bigl( \sum_{\sigma \in C}f_{\sigma}(x) \Bigr) \dx \leq \sum_{i=0}^n f(v_i).  \] \label{tw:eq1}
\end{thm}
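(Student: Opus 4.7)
My plan is to apply the classical Hermite--Hadamard inequality (Theorem~\ref{tw:1}) to the symmetrization $F=\sum_{\sigma\in C}f_\sigma$. By Theorem~\ref{tw:3}, $F$ is convex on~$S$, so Theorem~\ref{tw:1} gives
\[
 F(\barycenter)\le\frac{1}{|S|}\int_S F(x)\,\dx\le\frac{1}{n+1}\sum_{i=0}^n F(v_i).
\]
The middle quantity is already the integral appearing in the claim, so the proof reduces to identifying $F(\barycenter)$ with $(n+1)f(\barycenter)$ and $\tfrac{1}{n+1}\sum_{i=0}^n F(v_i)$ with $\sum_{i=0}^n f(v_i)$.

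For the lower bound I would observe that the barycenter has all barycentric coordinates equal to $\tfrac{1}{n+1}$, so they are invariant under any permutation. Hence $\sigma(\barycenter)=\barycenter$ for every $\sigma\in C$, and since $|C|=n+1$ it follows at once that $F(\barycenter)=(n+1)f(\barycenter)$.

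For the upper bound a short orbit-counting argument is needed. Since $v_i$ has barycentric coordinates $(\delta_{ij})_{j=0}^n$, the definition of $\sigma$ from the preliminaries gives $\sigma(v_i)=v_{\sigma^{-1}(i)}$. The cyclic group $C$ acts simply transitively on $\{0,\dots,n\}$, so for each fixed $i$ the map $\sigma\mapsto\sigma^{-1}(i)$ is a bijection $C\to\{0,\dots,n\}$. Swapping the order of summation then produces
\[
 \sum_{i=0}^n F(v_i)=\sum_{i=0}^n\sum_{\sigma\in C}f\bigl(v_{\sigma^{-1}(i)}\bigr)=(n+1)\sum_{j=0}^n f(v_j),
\]
and dividing by $n+1$ yields the claimed upper bound.

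No genuine obstacle arises: the whole argument is a direct reduction to the convex case via $F$. The only step demanding a little care is the orbit-counting identity above, together with the observation that the barycenter is a common fixed point of all cyclic permutations; once these are in hand, the conclusion follows immediately from Theorems~\ref{tw:1} and~\ref{tw:3}.
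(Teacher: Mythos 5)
Your proposal is correct and follows essentially the same route as the paper: convexity of the symmetrization $F$ via Theorem~\ref{tw:3}, then Theorem~\ref{tw:1} applied to $F$, then the identifications $F(\barycenter)=(n+1)f(\barycenter)$ and $\sum_i F(v_i)=(n+1)\sum_i f(v_i)$. Your orbit-counting justification of the second identity is just a more explicit version of the computation the paper performs.
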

\begin{proof}
Let $F=\sum_{\sigma\in C}f_{\sigma}$ be the symmetrization of~$f$ on~$S$. By the previous theorem~$F$ is convex on~$S$. Using the Hermite--Hadamard inequality (\cf~Theorem~\ref{tw:1}) we arrive at
\[
 F(\barycenter) \leq \frac{1}{|S|} \int_S F(x)\dx \leq \frac{1}{n+1} \sum_{i=0}^n F(v_i)\,.
\]
Observe that $\sigma (\barycenter)=\barycenter.$ Therefore
\begin{equation}
F(\barycenter)=\sum_{\sigma \in C}  f_{\sigma}(\barycenter)=\sum_{\sigma \in C} f \left( \sigma (\barycenter) \right)=(n+1)f(\barycenter)\,, \label{eq:uw13}
\end{equation}
while
\begin{multline}
 \sum_{i=0}^n F(v_i)=\sum_{i=0}^n \sum_{\sigma \in C} f_{\sigma}(v_i)=\sum_{i=0}^n \left( f(v_0)+ \dots +f(v_n) \right) \\ = (n+1)\sum_{i=0}^n f(v_i) \label{eq:uw131}
\end{multline}
and the proof is finished.
\end{proof}
\begin{rem}
 For $n=1$ and $S=[a,b]$ we obtain immediately the result due to Olbryś~\cite{Olb} given by~\eqref{eq:Olb}.
\end{rem}

Observe that in the Hermite--Hadamard inequality~\eqref{eq:sw} one fact was essential: the integral mean value $\mathcal{T}[f]=\frac{1}{|S|} \int_S f(x)\dx$ is a positive linear operator. This motivated the second author to prove the operator version of the inequality~\eqref{eq:sw} (\cf~\cite[Theorem 2]{Was08MIA}). Our next theorem offers an extension of Theorem \ref{tw:eq1} to positive linear operators going in this direction.

\begin{thm}
 Let $D\subset\R^n$ be an open and convex set and $S\subset D$ be a~simplex with vertices $v_0,\dots,v_n$ and barycenter $\barycenter$. Let $\T$ be positive linear functional defined (at least) on a~linear subspace of all functions mapping $S$ into $\R$ generated by a cone of convex functions.  Assume that
\[
 \mathcal{T} (\pi_i)=\frac{1}{|S|} \int_S \pi_i(x)\dx, \ i=1, \dots, n\,,
\]
where $\pi_i$ is the projection onto the $i$--th axis and $\mathcal{T} (\mathbf{1})=1$.
If $f:D\to\R$ is W-convex and $F$ is the symmetrization of~$f$ on~$S$, then
\begin{equation}
 F(\barycenter) \leq  \mathcal{T}[F] \leq \frac{1}{n+1} \sum_{i=0}^n F(v_i)\,. \label{eq:th7}
\end{equation}
\end{thm}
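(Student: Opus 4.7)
The plan is to reduce the statement to the convex case by invoking Theorem~\ref{tw:3}, which tells us that the symmetrization $F$ is convex on~$S$. Once this is established, the inequality~\eqref{eq:th7} becomes precisely the operator version of the Hermite--Hadamard inequality on a simplex (\cf~\cite[Theorem 2]{Was08MIA}) applied to~$F$. For completeness, I would sketch its derivation rather than quote it as a~black box.

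First I would exploit the hypotheses on~$\mathcal{T}$. Since $\mathcal{T}$ is linear, $\mathcal{T}(\mathbf{1})=1$ and $\mathcal{T}(\pi_i)$ equals the $i$-th coordinate of the integral mean, which is exactly the $i$-th coordinate $b_i$ of~$\barycenter$, we obtain $\mathcal{T}[\ell]=\ell(\barycenter)$ for every affine function $\ell\colon\R^n\to\R$. This identity is the only analytic input we need about~$\mathcal{T}$ beyond its positivity.

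For the left inequality I would pick a~supporting affine functional $\ell\leq F$ on~$S$ with $\ell(\barycenter)=F(\barycenter)$, whose existence is guaranteed by the convexity of~$F$ at the interior point~$\barycenter$. Positivity of~$\mathcal{T}$ together with the affine identity above yields
\[
 F(\barycenter)=\ell(\barycenter)=\mathcal{T}[\ell]\le\mathcal{T}[F]\,.
\]
For the right inequality I would write any $x\in S$ in barycentric coordinates $x=\sum_{i=0}^n t_i(x)v_i$ and note that each $t_i$ is an~affine function of~$x$ with $t_i(\barycenter)=\frac{1}{n+1}$. Convexity of~$F$ on~$S$ gives the pointwise majorization
\[
 F(x)\le\sum_{i=0}^n t_i(x)F(v_i)\,,\quad x\in S\,,
\]
the right-hand side being affine in~$x$. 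Applying~$\mathcal{T}$, using its positivity and the affine identity once more, we arrive at
\[
 \mathcal{T}[F]\le\sum_{i=0}^n t_i(\barycenter)F(v_i)=\frac{1}{n+1}\sum_{i=0}^n F(v_i)\,,
\]
which completes the argument.

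The only genuine obstacle I anticipate is justifying the existence of the supporting affine functional; this is standard for convex functions on open convex sets, and since $\barycenter$ lies in the interior of~$S$, there is no subtlety at the boundary. Everything else is bookkeeping built on Theorem~\ref{tw:3} and the linear-algebraic observation that the moment conditions force $\mathcal{T}$ to reproduce affine functions at~$\barycenter$.
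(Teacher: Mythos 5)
Your argument is correct, and its skeleton coincides with the paper's: both reduce to the convex case via Theorem~\ref{tw:3} and then feed the convex symmetrization $F$ into an operator version of the Hermite--Hadamard inequality. The difference is in how that second step is handled. The paper verifies that $\T$ agrees with the integral mean $\frac{1}{|S|}\int_S\varphi(x)\dx$ on all affine functions $\varphi$ and then invokes \cite[Theorem~2]{Was08MIA} as a black box. You instead observe the equivalent fact that $\T$ reproduces affine functions at the barycenter, $\T[\ell]=\ell(\barycenter)$, and re-derive the operator inequality from scratch: a supporting affine functional at the interior point $\barycenter$ gives the lower bound, and the barycentric-coordinate majorization $F(x)\le\sum_{i=0}^n t_i(x)F(v_i)$ (with each $t_i$ affine and $t_i(\barycenter)=\frac{1}{n+1}$) gives the upper bound, positivity of $\T$ doing the rest. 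Your version is self-contained and makes transparent exactly which properties of $\T$ are used (positivity, linearity, and reproduction of affine functions), at the cost of a couple of standard but nontrivial facts you must justify --- chiefly the existence of the subgradient of $F$ at $\barycenter$, which is unproblematic since $\barycenter$ lies in the interior of the full-dimensional simplex $S$, and the membership of $F-\ell$ and $\sum_i t_i F(v_i)-F$ in the domain of $\T$, which holds because affine functions and $F$ are all convex and the domain is the linear span of the convex cone. The paper's version is shorter but leans on the external reference.
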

\begin{proof}
Take an arbitrary affine function $\varphi:\R^n\to\mathbb{R}$. It has a~form
\[
 \varphi(x)=\sum_{i=0}^n \alpha_ix_i+\beta =\sum_{i=0}^n \alpha_i\pi_i(x)+\beta\,,
 \quad x=(x_1,\dots,x_n)\in\R^n
\]
for some scalars $\alpha_0,\dots,\alpha_n,\beta$.
The linearity yields
\begin{align*}
 \T[\varphi]&=\T\Bigl[\sum_{i=0}^n \alpha_i\pi_i+\beta\Bigr]=\sum_{i=0}^n\alpha_i\T[
  \pi_i]+\beta\mathcal{T}[\mathbf{1}]
 =\sum_{i=0}^n \frac{\alpha_i}{|S|}\int_S \pi_i(x)\dx +\beta\\
&  =\frac{1}{|S|}\int_S\Bigl(\sum_{i=0}^n\alpha_i\pi_i(x)+\beta\Bigr)\dx
  =\frac{1}{|S|}\int_S\varphi(x)\dx\,.
\end{align*}
 Therefore $\T$ meets the assumptions of \cite[Theorem~2]{Was08MIA}. Hence, by convexity of~$F$, the inequality~\eqref{eq:th7} holds.
\end{proof}
Of course, taking in the above theorem $\T[f]=\frac{1}{|S|}\int_Sf(x)\dx$, we obtain immediately the Theorem~\ref{th:HH}.

\section{Hermite--Hadamard type inequality for strongly convex functions}

Let $D\subset\R^n$ be a convex set and $c>0$. The function $f:D\to\mathbb{R}$ is called \emph{strongly convex with modulus}~$c$, if
\[
 f\bigl(tx+(1-t)y\bigr)\le tf(x)+(1-t)f(y)-ct(1-t)\|x-y\|^2
\]
for all $x,y \in D$ and $t \in [0,1].$ Strongly convex functions were introduced by Polyak~\cite{Pol66} (see also \cite{MerNik10} for some interesting remarks on this class of functions). Let us only mention that a~strongly convex function is necessarily convex, but the converse does not hold (for instance, affine functions are not strongly convex).

Below we present the multivariate counterpart of a~result due to Merentes and Nikodem~\cite{MerNik10}.
\begin{thm}\label{tw:sc}
If $f: S \to \mathbb{R}$ is strongly convex with modulus $c$, then
\begin{multline*}
 f(\barycenter)+c\biggl(\frac{1}{|S|}\int_S\|x\|^2\dx-\|\barycenter\|^2\biggr)
 \leq\frac{1}{|S|}\int_Sf(x)\dx\\
 \leq\frac{1}{n+1}\sum_{i=0}^nf(v_i)
  +c\biggl(\frac{1}{|S|}\int_S\|x\|^2\dx-\frac{1}{n+1}\sum_{i=0}^n\|v_i\|^2\biggr) .
\end{multline*}
\end{thm}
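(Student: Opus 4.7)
The plan is to reduce the assertion to the classical Hermite--Hadamard inequality on simplices (Theorem~\ref{tw:1}) by invoking the well-known characterization of strong convexity: a function $f:S\to\R$ is strongly convex with modulus~$c$ if and only if the auxiliary function $g(x):=f(x)-c\|x\|^2$ is (ordinarily) convex on~$S$. This equivalence follows from the identity
\[
 \|tx+(1-t)y\|^2=t\|x\|^2+(1-t)\|y\|^2-t(1-t)\|x-y\|^2,
\]
which, when substituted into the definition of strong convexity, exactly cancels the quadratic correction term and yields convexity of $g$.

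Once $g$ is known to be convex, I would apply Theorem~\ref{tw:1} to it, obtaining
\[
 g(\barycenter)\le\frac{1}{|S|}\int_S g(x)\dx\le\frac{1}{n+1}\sum_{i=0}^n g(v_i).
\]
Then I would substitute $g(x)=f(x)-c\|x\|^2$ into all three quantities and rearrange so that the $f$-terms appear alone on one side while the $\|\cdot\|^2$-terms are collected into a single bracket on the other side. The left inequality yields $f(\barycenter)+c\bigl(\frac{1}{|S|}\int_S\|x\|^2\dx-\|\barycenter\|^2\bigr)\le\frac{1}{|S|}\int_Sf(x)\dx$, and the right one produces the corresponding upper bound.

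The main (and essentially the only) obstacle is justifying the preliminary equivalence between strong convexity of $f$ and convexity of $g$; this is folklore (it appears already in Polyak~\cite{Pol66} and Merentes--Nikodem~\cite{MerNik10}, both cited in the paper), so a brief one-line reminder should suffice. No symmetrization is needed here, in contrast to the W-convex case of Theorem~\ref{tw:eq1}: strong convexity is already stronger than convexity, and the quadratic correction $-c\|x\|^2$ propagates cleanly through both the barycentric evaluation, the integral mean $\frac{1}{|S|}\int_S\,\cdot\,\dx$, and the vertex average $\frac{1}{n+1}\sum_{i=0}^n\,\cdot\,$, giving exactly the extra terms that appear in the statement.
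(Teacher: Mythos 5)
Your proposal is correct and follows essentially the same route as the paper: the paper's proof also sets $g=f-c\|\cdot\|^2$, notes that strong convexity of $f$ with modulus $c$ makes $g$ convex (citing \cite{HirLam01} and \cite{MerNik10}), applies Theorem~\ref{tw:1} to $g$, and rearranges by adding $\frac{c}{|S|}\int_S\|x\|^2\dx$ to all sides. Nothing further is needed.
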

\begin{proof}
We take a function $g:S \to \mathbb{R}$ of the form $g=f-c\|\cdot\|^2.$ Since $f$ is strongly convex with modulus $c$, then $g$ is convex (for a~quick reference see~\cite{HirLam01} or~\cite{MerNik10}). Therefore $g$ satisfies the Hermite--Hadamard inequality \eqref{eq:sw}:
\[g(\barycenter) \leq \frac{1}{|S|} \int_S g(x)\dx \leq \frac{1}{n+1} \sum_{i=0}^n g(v_i).\]
Then we arrive at
\begin{multline*}
 f(\barycenter)-c\|\barycenter\|^2\leq \frac{1}{|S|} \int_S f(x)\dx-\frac{c}{|S|}\int_S\|x\|^2\dx\\
 \leq \frac{1}{n+1} \sum_{i=0}^n f(v_i)-\frac{c}{n+1} \sum_{i=0}^n\|v_i\|^2\,.
\end{multline*}
and our result follows by adding the term $\frac{c}{|S|}\int_S\|x\|^2\dx$ to both sides of the above inequality.
\end{proof}

Denote by $S_1$ the unit simplex in $\R^n$, i.e. the simplex with vertices $e_0=(0,0,\dots,0)$, $e_1=(1,0,\dots,0)$, \dots, $e_n=(0,\dots,0,1)$. Then \[\barycenter=\frac{1}{n+1} \sum_{i=0}^ne_i\,.\]
It is well-known that $|S|=\frac{1}{n!}$. The second author noticed in~\cite{Was08MIA} (proof of Corollary~8) that
\begin{equation}\label{eq:c2}
\int_{S_1} \pi_i^2(x)\dx=\frac{2}{(n+2)!}\,,\quad i=1,\dots , n\,.
\end{equation}

For strongly convex functions defined on the unit simplex~$S_1$, Theorem~\ref{tw:sc} together with~\eqref{eq:c2} gives us
\begin{cor}
If $f:S_1 \to  \mathbb{R}$ is strongly convex with modulus $c$, then
\begin{multline*}
 f(\barycenter)+\frac{cn^2}{(n+1)^2(n+2)}\leq n! \int_{S_1} f(x)\dx\\
 \leq \frac{1}{n+1} \sum_{i=0}^nf(e_i)-\frac{cn^2}{(n+1)(n+2)} .
\end{multline*}
\end{cor}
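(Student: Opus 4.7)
The plan is to specialize Theorem~\ref{tw:sc} to the case $S=S_1$; after that, the corollary becomes pure arithmetic. The three quantities that must be made explicit are $\|\barycenter\|^2$, $\frac{1}{n+1}\sum_{i=0}^n\|e_i\|^2$, and the normalized second moment $\frac{1}{|S_1|}\int_{S_1}\|x\|^2\dx$.

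For the first two, since $e_0=0$ and $e_1,\dots,e_n$ are the standard basis of~$\R^n$, one has $\barycenter=\frac{1}{n+1}(1,\dots,1)$, whence $\|\barycenter\|^2=\frac{n}{(n+1)^2}$, while $\|e_0\|=0$ and $\|e_i\|=1$ for $i\ge 1$ yield $\frac{1}{n+1}\sum_{i=0}^n\|e_i\|^2=\frac{n}{n+1}$. For the second moment I would expand $\|x\|^2=\sum_{i=1}^n\pi_i^2(x)$, apply the identity~\eqref{eq:c2} cited from~\cite{Was08MIA}, and use $|S_1|=\frac{1}{n!}$ to conclude
\[
\frac{1}{|S_1|}\int_{S_1}\|x\|^2\dx=n!\cdot n\cdot\frac{2}{(n+2)!}=\frac{2n}{(n+1)(n+2)}.
\]

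Next I would substitute these three numbers into the correction terms of Theorem~\ref{tw:sc}. The left-hand correction collapses as
\[
c\Bigl(\frac{2n}{(n+1)(n+2)}-\frac{n}{(n+1)^2}\Bigr)=\frac{cn}{n+1}\cdot\frac{2(n+1)-(n+2)}{(n+1)(n+2)}=\frac{cn^2}{(n+1)^2(n+2)},
\]
and the right-hand correction as
\[
c\Bigl(\frac{2n}{(n+1)(n+2)}-\frac{n}{n+1}\Bigr)=\frac{cn}{n+1}\cdot\frac{2-(n+2)}{n+2}=-\frac{cn^2}{(n+1)(n+2)}.
\]
Since $\frac{1}{|S_1|}\int_{S_1}f(x)\dx=n!\int_{S_1}f(x)\dx$, the resulting chain of inequalities is precisely the stated assertion.

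There is essentially no obstacle: the only nontrivial ingredient is the moment formula~\eqref{eq:c2}, which is already on hand. The only mild care needed is bookkeeping the sign of the upper correction (it comes out negative, reflecting that the second moment over~$S_1$ is smaller than the vertex average of $\|e_i\|^2$) and factoring the common $\frac{n}{n+1}$ so that the simplification $\frac{2}{n+2}-\frac{1}{n+1}=\frac{n}{(n+1)(n+2)}$ makes the final expression transparent.
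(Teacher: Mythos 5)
Your proposal is correct and follows exactly the route the paper intends: the paper gives no written proof beyond the remark that the corollary follows from Theorem~\ref{tw:sc} combined with~\eqref{eq:c2}, and your computations of $\|\barycenter\|^2=\frac{n}{(n+1)^2}$, $\frac{1}{n+1}\sum_{i=0}^n\|e_i\|^2=\frac{n}{n+1}$ and $\frac{1}{|S_1|}\int_{S_1}\|x\|^2\dx=\frac{2n}{(n+1)(n+2)}$, together with the simplification of the two correction terms, are all accurate. Nothing is missing.
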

\begin{rem} \label{tw:rem11}
 For $n=1$ we obtain the inequality
\[
 f\biggl(\frac{1}{2} \biggr) +\frac{c}{12} \leq \int_0^1f(x)\dx \leq \frac{f(0)+f(1)}{2} - \frac{c}{6}\,,
\]
which corresponds with the result of Merentes and Nikodem presented in \cite[Theorem 6]{MerNik10} (the full version of their inequality could be derived from~Theorem~\ref{tw:sc} by setting $n=1$ and an arbitrary~compact interval $[a,b]$ in the role of~$S$).
\end{rem}

\section{Hermite--Hadamard type inequality for strongly W-convex functions}

Let $D\subset\R^n$ be a convex set and $c>0$. The function $f:D \to \mathbb{R}$ is called \emph{strongly W-convex with modulus $c$}, if
\[
 f\bigl(tx+(1-t)y\bigr)+f \bigl( (1-t)x+ty \bigr) \leq f(x)+f(y)-2ct(1-t)\|x-y\|^2
\]
for all $x,y \in D$ and $t \in [0,1]$. Such functions were introduced by Merentes, Nikodem and Rivas in~\cite{MerNikRiv11}. We present below a~counterpart of Theorem~\ref{tw:3} for strongly W-convex functions.

\begin{thm}
 Let $D\subset\R^n$ be an open and convex set and $S\subset D$ be a~simplex. If $f:D\to\R$ is strongly W-convex with modulus $c$, then its symmetrization~$F$ is strongly convex on~$S$ with modulus $(n+1)c$. In particular, $F$ is integrable on $S$.
\end{thm}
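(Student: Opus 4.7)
The plan is to mirror the proof of Theorem~\ref{tw:3} by reducing strong W-convexity to W-convexity through the quadratic $c\|\cdot\|^2$. First, I would verify the standard fact that $f$ is strongly W-convex with modulus~$c$ if and only if $g:=f-c\|\cdot\|^2$ is W-convex on $D$; this uses the identity
\[
 \|tx+(1-t)y\|^2+\|(1-t)x+ty\|^2=\|x\|^2+\|y\|^2-2t(1-t)\|x-y\|^2,
\]
which exactly absorbs the $2ct(1-t)\|x-y\|^2$ correction term in the strong W-convex inequality. Theorem~\ref{tw:3} applied to $g$ then gives that $G:=\sum_{\sigma\in C} g_\sigma$ is convex on $S$. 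By linearity of the symmetrization map,
\[
 F=G+c\Psi,\qquad \Psi(x):=\sum_{\sigma\in C}\|\sigma(x)\|^2.
\]

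The core step is to show that $c\Psi$ is itself strongly convex with modulus $(n+1)c$, equivalently, that $\Psi-(n+1)\|\cdot\|^2$ is convex on $S$. Once this is in hand,
\[
 F-(n+1)c\|\cdot\|^2=G+c\bigl(\Psi-(n+1)\|\cdot\|^2\bigr)
\]
is a sum of two convex functions, hence convex; this is exactly the strong convexity of $F$ on $S$ with modulus $(n+1)c$.

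The main obstacle is the quadratic-form analysis of $\Psi$. Since every $\sigma\in C$ fixes the barycenter $\barycenter$, one may write $\sigma(x)=L_\sigma(x-\barycenter)+\barycenter$ with $L_\sigma$ linear. Using the identity $\sum_{\sigma\in C}\sigma(x)=(n+1)\barycenter$ (which follows from the cyclic summation of barycentric coordinates, exactly as in Lemma~\ref{tw:l2}), the cross term in the expansion of $\Psi$ vanishes, and the problem reduces to verifying $\sum_{\sigma\in C}L_\sigma^{T}L_\sigma\succeq(n+1)I$. I would attack this via the cyclic group structure of $C$ together with a symmetric coordinate system adapted to the barycenter of the simplex.

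Finally, integrability of $F$ on $S$ is immediate from strong convexity: $F$ is convex on $S$, hence bounded above by $\max_{0\le i\le n}F(v_i)$ (Jensen at the vertices) and bounded below by any supporting affine functional at $\barycenter$; being bounded and measurable on the compact simplex $S$, $F$ is Lebesgue integrable.
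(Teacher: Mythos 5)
Your reduction $f=g+c\|\cdot\|^2$ with $g$ W-convex, the application of Theorem~\ref{tw:3} to $g$, and the resulting decomposition $F=G+c\Psi$ with $\Psi(x)=\sum_{\sigma\in C}\|\sigma(x)\|^2$ all match the paper's strategy and are correct. You also correctly isolate the crux: everything reduces to the convexity of $\Psi-(n+1)\|\cdot\|^2$ on $S$, i.e.\ to the matrix inequality $\sum_{\sigma\in C}L_\sigma^{T}L_\sigma\succeq(n+1)I$. But you never prove this step --- you only say you ``would attack'' it --- and it is not a routine verification: the inequality is in fact \emph{false} for a general simplex. Take $n=2$, the unit simplex with $v_0=(0,0)$, $v_1=(1,0)$, $v_2=(0,1)$, and the direction $u=v_2-v_1$, whose barycentric coefficients are $(0,-1,1)$. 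The three cyclic permutations carry $u$ to $v_2-v_1$, $v_1-v_0$ and $v_0-v_2$, so
\[
 \sum_{\sigma\in C}\|L_\sigma u\|^2=2+1+1=4<6=(n+1)\|u\|^2.
\]
Concretely, for $f(x)=c\|x\|^2$ (which is strongly W-convex with modulus exactly $c$) one gets, along the edge $x(s)=(1-s)v_1+sv_2$, that $F(x(s))=2c\bigl((1-s)^2+s^2\bigr)$, whose second derivative equals $8c$, while strong convexity of $F$ with modulus $3c$ would require at least $2\cdot3c\cdot\|v_1-v_2\|^2=12c$. So the gap cannot be closed by any argument: the modulus $(n+1)c$ is simply not attained in general, and your ``core step'' fails.

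It is worth noting that your more careful bookkeeping exposes the defect in the paper's own proof, which disposes of $\Psi$ by asserting $\|\sigma(x)\|=\|x\|$ for every $\sigma\in C$ and $x\in S$; that identity would give $\Psi=(n+1)\|\cdot\|^2$ on the nose, but it holds only for very special simplices (essentially regular simplices centred at the origin, where each $\sigma$ acts as a linear isometry) --- already for $S=[0,1]$ and $\sigma(x)=1-x$ it fails. Your closing remark on integrability is fine: $F$ is convex on $S$, hence bounded above by $\max_iF(v_i)$, bounded below by a supporting affine functional at an interior point, and continuous on the interior of $S$. But the quantitative claim of the theorem, and with it both your argument and the paper's, breaks down precisely at the step you correctly identified and left open.
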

\begin{proof}
 Since $f$ is strongly W-convex with modulus~$c$, there exists a~W-convex function $h:D\to\mathbb{R}$ such that $f(x)=h(x)+c\|x\|^2$, $x\in D$ (\cf~\cite[Corollary~5]{MerNikRiv11}). Take an arbitrary vector $x\in S$. If $\sigma\in C$, then $\|\sigma(x)\|=\|x\|$, $x\in S$, whence $f_{\sigma}(x)=h_{\sigma}(x)+c\|x\|^2$. Therefore
 \[
  F(x)=\sum_{\sigma \in C} f_{\sigma}(x)=\sum_{\sigma \in C}h_{\sigma}(x) +\sum_{\sigma \in C} c\|x\|^2=\sum_{\sigma \in C}h_{\sigma}(x)+(n+1)c\|x\|^2\,.
 \]
 Theorem~\ref{tw:3} yields that a~function $\displaystyle\sum_{\sigma\in C}h_{\sigma}$ is convex on~$S$. Then $F$ is strongly convex on~$S$ with modulus $(n+1)c$ (\cf~\cite{HirLam01} or~\cite{MerNik10}) and the proof is finished.
\end{proof}

\begin{cor} \label{tw:swc}
 Let $D\subset\R^n$ be an open and convex set and $S\subset D$ be a~simplex with vertices $v_0,\dots,v_n$ and barycenter $\barycenter$. If $f:D\to\R$ is strongly W-convex with modulus~$c$, then
\begin{multline*}
f(\barycenter) +c \biggl( \frac{1}{|S|} \int_S \|x\|^2\dx -\|\barycenter\|^2 \biggr) \leq \frac{1}{(n+1)|S|} \int_S \biggl( \sum_{\sigma \in C}f_{\sigma}(x)\dx \biggr) \\
\leq \frac{1}{n+1} \sum_{i=0}^nf(v_i)+c \biggl( \frac{1}{|S|} \int_S \|x\|^2\dx-\frac{1}{n+1}\sum_{i=0}^n \|v_i\|^2  \biggr)
\end{multline*}
\end{cor}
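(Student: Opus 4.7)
The plan is to deduce the corollary by feeding the symmetrization $F=\sum_{\sigma\in C}f_\sigma$ into Theorem~\ref{tw:sc}. By the theorem immediately preceding this corollary, $F$ is strongly convex on $S$ with modulus $(n+1)c$, which is exactly the hypothesis Theorem~\ref{tw:sc} asks for. Applying that theorem with $F$ in the role of the strongly convex function and $(n+1)c$ in the role of the modulus produces the chain
\begin{multline*}
F(\barycenter)+(n+1)c\biggl(\frac{1}{|S|}\int_S\|x\|^2\dx-\|\barycenter\|^2\biggr)
\le \frac{1}{|S|}\int_S F(x)\dx\\
\le \frac{1}{n+1}\sum_{i=0}^n F(v_i)+(n+1)c\biggl(\frac{1}{|S|}\int_S\|x\|^2\dx-\frac{1}{n+1}\sum_{i=0}^n\|v_i\|^2\biggr).
\end{multline*}

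Next, I would evaluate $F$ at the barycentre and at the vertices by recycling the computations already performed inside the proof of Theorem~\ref{th:HH}. Since every $\sigma\in C$ fixes $\barycenter$, we have $F(\barycenter)=(n+1)f(\barycenter)$ as in~\eqref{eq:uw13}; and since for fixed $j$ the index $\sigma(j)$ ranges over all of $\{0,\dots,n\}$ as $\sigma$ runs through the cyclic permutations, $\sum_{i=0}^n F(v_i)=(n+1)\sum_{i=0}^n f(v_i)$ as in~\eqref{eq:uw131}.

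To finish, I would substitute both identities into the displayed chain and divide every term by $n+1$. The factor $(n+1)c$ multiplying the quadratic correction on the two outer sides collapses to $c$, the middle term turns into $\frac{1}{(n+1)|S|}\int_S\sum_{\sigma\in C}f_\sigma(x)\,\dx$, and the extremal bounds take exactly the form written in the statement. There is no genuine obstacle: the entire argument is a bookkeeping exercise in which the enlarged modulus $(n+1)c$ coming from the symmetrization step must cancel cleanly against the division by $n+1$ produced by collapsing $F(\barycenter)$ and $\sum_{i=0}^n F(v_i)$ back to $f(\barycenter)$ and $\sum_{i=0}^n f(v_i)$.
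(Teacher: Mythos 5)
Your proposal is correct and coincides with the paper's own proof: both apply Theorem~\ref{tw:sc} to the symmetrization $F$ (strongly convex with modulus $(n+1)c$ by the preceding theorem), then use the identities \eqref{eq:uw13} and \eqref{eq:uw131} and divide through by $n+1$. The bookkeeping, including the cancellation of the factor $n+1$ against the enlarged modulus, is exactly as in the paper.
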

\begin{proof}
Since the symmetrization $F$ of the function $f$ is strongly convex on~$S$ with modulus $(n+1)c,$ then by virtue of Theorem~\ref{tw:sc}
\begin{multline*}
 F(\barycenter)+(n+1)c\biggl(\frac{1}{|S|}\int_S\|x\|^2\dx-\|\barycenter\|^2\biggr)
 \leq\frac{1}{|S|}\int_SF(x)\dx\\
 \leq\frac{1}{n+1}\sum_{i=0}^nF(v_i)
  +(n+1)c\biggl(\frac{1}{|S|}\int_S\|x\|^2\dx-\frac{1}{n+1}\sum_{i=0}^n\|v_i\|^2\biggr) .
\end{multline*}
We have by \eqref{eq:uw13}, \eqref{eq:uw131}
\[
 F(\barycenter)=(n+1)f(\barycenter)\quad  \text{and} \quad \sum_{i=0}^nF(v_i)=(n+1) \sum_{i=0}^n f(v_i)\,,
\]
from which the Corollary follows.
\end{proof}

For strongly W-convex functions on the unit simplex~$S_1$, Corollary~\ref{tw:swc} together with~\eqref{eq:c2} gives us
\begin{cor}
 Let $D\subset\R^n$ be an open and convex such that $S_1\subset D$. If $f:D\to\R$ is strongly W-convex with modulus $c$, then
\begin{multline*}
f(\barycenter)+\frac{cn^2}{(n+1)^2(n+2)} \leq \frac{n!}{(n+1)} \int_{S_1} \biggl( \sum_{\sigma \in C} f_{\sigma}(x)\dx \biggr)\\
 \leq \frac{1}{n+1} \sum_{i=0}^nf(v_i)-\frac{cn^2}{(n+1)(n+2)} .
\end{multline*}
\end{cor}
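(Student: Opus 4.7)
The plan is simply to specialize Corollary~\ref{tw:swc} to the unit simplex $S=S_1$ and then evaluate each geometric quantity appearing in that corollary. Four explicit values are needed: the volume $|S_1|=1/n!$, the squared norm of the barycenter $\|\barycenter\|^2$, the normalized sum $\frac{1}{n+1}\sum_{i=0}^n\|e_i\|^2$, and the mean $\frac{1}{|S_1|}\int_{S_1}\|x\|^2\dx$.

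First I would compute $\|\barycenter\|^2$: since $\barycenter=\frac{1}{n+1}(1,1,\dots,1)$, summing over the $n$ coordinates gives $\|\barycenter\|^2=\frac{n}{(n+1)^2}$. Because $e_0$ is the origin and the remaining $e_i$ are unit vectors, one has $\sum_{i=0}^n\|e_i\|^2=n$, hence $\frac{1}{n+1}\sum_{i=0}^n\|e_i\|^2=\frac{n}{n+1}$. For the integral, writing $\|x\|^2=\sum_{i=1}^n\pi_i^2(x)$ and applying~\eqref{eq:c2} term by term yields $\int_{S_1}\|x\|^2\dx=\frac{2n}{(n+2)!}$; multiplication by $1/|S_1|=n!$ gives $\frac{1}{|S_1|}\int_{S_1}\|x\|^2\dx=\frac{2n}{(n+1)(n+2)}$.

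Having these values, the remaining step is pure algebra. The ``excess'' term on the left-hand side of Corollary~\ref{tw:swc} simplifies to
\[
 c\biggl(\frac{2n}{(n+1)(n+2)}-\frac{n}{(n+1)^2}\biggr)
 =\frac{cn^2}{(n+1)^2(n+2)},
\]
and the corresponding term on the right-hand side simplifies to
\[
 c\biggl(\frac{2n}{(n+1)(n+2)}-\frac{n}{n+1}\biggr)
 =-\frac{cn^2}{(n+1)(n+2)}.
\]
Using $\frac{1}{(n+1)|S_1|}=\frac{n!}{n+1}$ in the middle, Corollary~\ref{tw:swc} collapses precisely to the claimed chain of inequalities.

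There is no conceptual obstacle here; the corollary is a direct substitution once~\eqref{eq:c2} is invoked. The only place where minor care is required is the index bookkeeping---remembering that $e_0$ contributes nothing to $\sum\|e_i\|^2$ and that~\eqref{eq:c2} is stated for $i=1,\dots,n$, which is exactly what the Euclidean norm on $\R^n$ calls for.
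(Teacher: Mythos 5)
Your proposal is correct and matches the paper's intended derivation exactly: the paper obtains this corollary by combining Corollary~\ref{tw:swc} with~\eqref{eq:c2}, and your explicit evaluations of $\|\barycenter\|^2$, $\frac{1}{n+1}\sum_{i=0}^n\|e_i\|^2$ and $\frac{1}{|S_1|}\int_{S_1}\|x\|^2\dx$ are all accurate. The algebraic simplifications of the two excess terms are also correct, so nothing is missing.
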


\begin{rem}
 For $n=1$ we obtain the inequality
\[
 f\biggl( \frac{1}{2} \biggr) +\frac{c}{12} \leq\frac{1}{2} \int_0^1\bigl( f(x) +f(1-x)\bigr)  \dx \leq \frac{f(0)+f(1)}{2} - \frac{c}{6}\,,
\]
(compare with~Remark \ref{tw:rem11}). The similar inequality for an arbitrary compact interval $[a,b]$ could be derived directly from Corollary~\ref{tw:swc}.
\end{rem}

\end{document}